\numberwithin{equation}{section}
\newtheorem{theorem}{Theorem}[section]
\newtheorem{corollary}[theorem]{Corollary}
\newtheorem{lemma}[theorem]{Lemma}
\newtheorem{proposition}[theorem]{Proposition}
\theoremstyle{definition}
\newtheorem{remark}[theorem]{Remark}
\newtheorem{example}[theorem]{Example}
\newcommand{\Cinf}{\ensuremath{\mathcal{C}^\infty}}
\newcommand{\mb}[1]{\ensuremath{\mathbb{#1}}}
\newcommand{\N}{\mb{N}}
\newcommand{\R}{\mb{R}}
\newcommand{\G}{\ensuremath{{\mathcal G}}}
\newcommand{\EM}{\ensuremath{{\mathcal E}_{M}}}
\newcommand{\Neg}{\mathcal{N}}
\newcommand{\supp}{\mathop{\mathrm{supp}}}
\newfont{\bigmath}{cmr12 at 13pt}
\newfont{\grecomath}{cmmi12 at 15pt}
\newcommand{\p}{\ensuremath{\partial}}
\newcommand{\beq}{\begin{equation}}
\newcommand{\eeq}{\end{equation}}
\newcommand{\eps}{\varepsilon}
\newcommand{\cB}{{\mathcal B}}
\newcommand{\cC}{{\mathcal C}}
\newcommand{\cD}{{\mathcal D}}
\newcommand{\cE}{{\mathcal E}}
\newcommand{\cG}{{\mathcal G}}
\newcommand{\cM}{{\mathcal M}}
\newcommand{\cN}{{\mathcal N}}
\newcommand{\cP}{{\mathcal P}}
\newcommand{\cV}{{\mathcal V}}
\begin{document}
\title{Regular generalized solutions to semilinear wave equations}
\author
{Hideo Deguchi\\
   \texttt{hdegu@sci.u-toyama.ac.jp}\\
   Department of Mathematics, University of Toyama\\
   Gofuku 3190, 930-8555 Toyama, Japan
\and
Michael Oberguggenberger\\
  \texttt{michael.oberguggenberger@uibk.ac.at}\\
Arbeitsbereich f\"{u}r Technische Mathematik,
Universit\"{a}t Innsbruck \\
Technikerstra\ss e 13, A-6020 Innsbruck, Austria
}
\date{}
\maketitle

\begin{abstract}
The paper is devoted to proving an existence and uniqueness result for generalized solutions to semilinear wave equations with a small nonlinearity in space dimensions 1, 2, 3. The setting is the one of Colombeau algebras of generalized functions. It is shown that for a nonlinearity of arbitrary growth and sign, but multiplied with a small parameter, the initial value problem for the semilinear wave equation has a unique solution in the Colombeau algebra of generalized functions of bounded type.
The proof relies on a fixed point theorem in the ultra-metric topology on the algebras involved. In classical terms, the result says that the semilinear wave equations under consideration have global classical solutions up to a rapidly vanishing error.
\end{abstract}

{\bf Keywords.} Semilinear wave equations, small nonlinearities, existence of generalized solutions, algebras of generalized functions

{\bf AMS Subject Classifications.} Primary, 35D05, 35D10, 46F30; Secondary, 35L71

\section{Introduction}\label{sec : intro}

This paper addresses existence and regularity of solutions to semilinear wave equations with a small nonlinearity. The equations are of the form
\begin{equation}\label{eq : NLWE}
\begin{array}{l}
	\partial_t^2u - \Delta u = h(\varepsilon) f(u),  \quad t \in [0,T], \ x \in \mathbb{R}^d, \vspace{4pt}\\
	u|_{t = 0} = u_{0},\quad \partial_tu|_{t = 0} = u_{1}, \quad x \in \mathbb{R}^d
\end{array}
\end{equation}
where $\Delta$ denotes the Laplacian, $h(\varepsilon)$ is a net of positive real numbers tending to zero as $\varepsilon\to 0$, $f$ is smooth with
$f(0) = 0$. The initial data $u_0$ and $u_1$ are generalized functions of compact support, and the space dimension is $d=1,2,3$.
Approximating the initial data by nets of smooth functions $(u_{0\varepsilon}, u_{1\varepsilon})_{\varepsilon\in(0,1]}$, the goal of the paper is to establish the existence of a net of smooth solutions $(u_\varepsilon)_{\varepsilon\in(0,1]}$ up to an asymptotic error term of $O(\varepsilon^\infty)$. The paper is formulated in the framework of Colombeau generalized functions.

We present a new existence result of a global generalized solution without growth or sign restrictions on the nonlinearity $f$, for initial data possessing so-called $\mathcal{G}^0$-regularity. It is motivated by a result on propagation of singularities in the one-dimensional case of the authors \cite{DO:2019}. Our main tool will be the Banach fixed point theorem in the so-called sharp topology, a complete ultra-metric topology on the Colombeau algebras.

In the classical literature, the semilinear wave equation\;\eqref{eq : NLWE} has been studied intensively when $h(\varepsilon)\equiv 1$, with small or with large initial data. The existence or nonexistence of a global classical solution is known to depend on the space dimension, the sign and the growth properties of $f(u)$ as $|u|\to \infty$, and the size of the initial data. Just to pick two prominent examples: When $f(u) = -|u|^{p-1}u$, large data solutions of finite energy exist for all $p\geq 1$ in space dimension $d=1$, whereas the critical exponent for such solutions is $p=5$ in space dimension $d=3$, see e.~g. \cite{Haraux:2018, Struwe:1992}. On the other side, when $f(u) = |u|^{p}$, small data solutions blow up for small $p$ less than the Strauss exponent but exist globally beyond that exponent \cite{LY:2018, TW:2014}. For weak solutions, the question of existence and/or uniqueness is yet different \cite{Struwe:1992}. This brief recall of the huge classical literature on semilinear wave equations may suffice. Our focus is on solutions in the Colombeau algebra $\cG([0,T]\times\R^d)$.

Before describing our results in more detail, a few words about Colombeau algebras are in order. Let $\Omega$ be an open subset of $\R^d$. Denote by $\mathcal{O}_M(\R)$ the space of smooth functions such that each derivative grows at most polynomially at infinity, and by $\mathcal{D}'(\Omega)$ the space of distributions on $\Omega$. Colombeau algebras are algebras of families $(u_\varepsilon)_{\varepsilon\in(0,1]}$ of smooth functions modulo asymptotically vanishing families, i.e., families all whose derivatives vanish of order $\varepsilon^\infty$ (i.~e., of order $\varepsilon^a$ for every $a\geq 0$) on compact sets as $\varepsilon\to 0$. For the following discussion, we shall need
\begin{itemize}
\item The Colombeau algebra $\cG(\Omega)$: A family $(u_\varepsilon)_{\varepsilon\in(0,1]}$ represents an element of $\cG(\Omega)$, if every derivative
$\partial^\alpha u_\varepsilon$ is $O(\varepsilon^{-b})$ on compact sets, for some $b\geq 0$. The inclusion $\cD'(\Omega)\subset\cG(\Omega)$ holds, constructed by cut-off and convolution with a mollifier. $\mathcal{C}^\infty(\Omega)$ is a faithful subalgebra. $\cG(\Omega)$ is invariant by superposition with maps $f\in\mathcal{O}_M(\R)$.
\item The subalgebra $\cG^0(\Omega)$, characterized by the property that all derivatives $\partial^\alpha u_\varepsilon$ are $O(1)$ on every compact set. It holds that
$\cG^0 \cap \cD'(\Omega) = \cC^\infty(\Omega)$, and $\cG^0(\Omega)$ is invariant under superposition with arbitrary functions $f\in\cC^\infty(\Omega)$.
\end{itemize}
Similar definitions apply to Colombeau generalized functions on the closure of an open set.
The algebra $\cG^0(\Omega)$ has been introduced in the context of nonlinear regularity theory \cite{Delcroix:2007, O:2006} and may be viewed as a subalgebra of \emph{regular generalized functions}.

It has been known for a long time that for globally Lipschitz $f\in\mathcal{O}_M(\R)$, problem \eqref{eq : NLWE} has a unique solution in $\cG([0,T]\times\R^d)$ for arbitrary initial data in $\cG(\R^d)$ and any $T>0$ in space dimensions $d= 1,2, 3$, see e.~g. \cite{Colombeau:1985, O:1987, MORusso:1998}. In one space dimension and for globally Lipschitz $f\in\mathcal{O}_M(\R)$, existence and uniqueness of a solution in $\cG^0([0,T]\times\R)$ has been shown in \cite{O:2006}. For power nonlinearities, existence and uniqueness results in an $L^2$-based Colombeau algebra have been obtained in space dimensions $d\leq 9$ with suitable bounds on the polynomial growth of $f$ in \cite{NOP:2005}.

The main result of the paper is Theorem\;\ref{thm : existence}. It says that equation\;\eqref{eq : NLWE} has unique solutions in $\cG^0([0,T]\times\R^d)$ for arbitrary initial data in $\cG^0(\R^d)$ and any $T>0$ in space dimensions $d= 1,2, 3$ without any growth or sign restrictions on $f$, provided $h(\varepsilon) = O(\varepsilon^b)$ for some $b>0$.
As mentioned, the proof is based on a contraction mapping argument in the sharp topology on $\cG^0([0,T]\times\R^d)$. A map on this space is a contraction if it is regularizing; this makes it possible to have solutions for arbitrary $T$ and arbitrary smooth $f$.

One may see the result also as a classical global existence result for bounded data and small nonlinearities. In fact, Theorem\;\ref{thm : existence} delivers a net $(u_\varepsilon)_{\varepsilon\in(0,1]}$ of smooth functions that satisfies equation \eqref{eq : NLWE} up to a term of $O(\eps^\infty)$. Further, by uniqueness of the solution in the Colombeau algebra, the difference of any two such nets is also $O(\eps^\infty)$ (see Corollary\;\ref{prop : veryweak}). In the classical literature, such solutions have been considered as \emph{asymptotic solutions} \cite{MC:1979}, \emph{semiclassical solutions} \cite{Robert:1987} or \emph{very weak solutions} \cite{GR:2015, RT:2017, RT:2017a}.
A novelty of the paper appears to be that the generalized result in the Colombeau algebra is proved first, using the sharp topology and the contraction mapping principle on  $\cG^0([0,T]\times\R^d)$, and the classical result is deduced from it (Corollary\;\ref{prop : veryweak}). One could also go the other way round, but this appears to be much more tedious (Remark\;\ref{rem : classical}). The reason for the existence of solutions of arbitrary lifespan without growth and sign conditions on the nonlinearity is further illuminated in Example\;\ref{ex : lifespan}.

Finally, we show that the generalized solution with initial data in $\cG^0(\R^d)$ is associated with the corresponding solution of the linear equation (with $f\equiv 0$), that is, the difference of the defining representatives converge weakly to zero (Proposition\;\ref{prop : association}).

The paper ends with an appendix on $L^\infty$-estimates in the linear wave equation in space dimensions $d=1,2,3$. These estimates are known, but we found it convenient to collect them in a form needed in our arguments.

We mention that a fixed point principle of Leray-Schauder type in the Colombeau framework has been established and employed in the elliptic setting in \cite{PilipovicScarpalezos:2006}, working directly on the space of representing families. In the framework of $(\cC,\cE,\cP)$-algebras, a fixed point theorem has been formulated and applied in \cite{Marti:2015}, also using estimates on the representing families. Further, in the framework of \emph{generalized smooth functions} \cite{GKV:2015}, general fixed point principles have been established and used to solve ODEs and PDEs with nonlinearities given by generalized functions, see \cite{BG:2019, BG:2019a} and the remarks in \cite{LBG:2019}.

\section{Colombeau algebras}\label{sec : 2}

We will employ the {\it special Colombeau algebra} of generalized functions denoted by $\G^{s}$ in \cite{GKOS:2001} (called the {\it simplified Colombeau algebra} in \cite{B:1990}). However, here we will simply use the letter $\G$ instead.
This section serves to recall the definitions and properties required for our purpose. For more details, see e.~g. \cite{Colombeau:1984, Colombeau:1985, GKOS:2001, NPS:1998, O:1992}.

Given a non-empty open subset $\Omega$ of $\mathbb{R}^n$, the space of real valued, infinitely differentiable functions on $\Omega$ is denoted by $\Cinf(\Omega)$, while $\Cinf(\overline{\Omega})$ refers to the subspace of functions all whose derivatives have a continuous extension up to the closure of $\Omega$.

Let $\Cinf(\Omega)^{(0,1]}$ be the differential algebra of all maps from the interval $(0,1]$ into $\Cinf(\Omega)$. Thus each element of $\Cinf(\Omega)^{(0,1]}$ is a family $(u_{\varepsilon})_{\varepsilon \in (0,1]}$ of real valued smooth functions on $\Omega$. The subalgebra $\EM(\Omega)$ of \emph{moderate} nets is defined by the elements
$(u_{\varepsilon})_{\varepsilon \in (0,1]}$ of $\Cinf(\Omega)^{(0,1]}$ with the property that, for all $K \Subset \Omega$ and $\alpha \in \mathbb{N}_0^n$, there exists $b\in\R$ such that
\begin{equation}\label{eq:moderate}
	\sup_{x \in K} |\partial^{\alpha} u_{\varepsilon}(x)| = O(\varepsilon^{b}) \quad {\rm as}\ \varepsilon \to 0.
\end{equation}
The ideal $\Neg(\Omega)$ of \emph{negligible} nets is defined by all elements $(u_{\varepsilon})_{\varepsilon \in (0,1]}$ of $\Cinf(\Omega)^{(0,1]}$ with the property that, for all $K \Subset \Omega$, $\alpha \in\mathbb{N}_0^n$ and $a \ge 0$,
\begin{equation}\label{eq:negligible}
	\sup_{x \in K} |\partial^{\alpha} u_{\varepsilon}(x)| = O(\varepsilon^a) \quad {\rm as}\ \varepsilon \to 0.
\end{equation}
The {\it Colombeau algebra} $\G(\Omega)$ {\it of generalized functions} is defined as the factor space
\[
	\G(\Omega) = \EM(\Omega) / \Neg(\Omega).
\]
The Colombeau algebra $\G(\overline{\Omega})$ on the closure of $\Omega$ is constructed in a similar way: the compact subsets $K$ occurring in the definition are now compact subsets of $\overline{\Omega}$, i.e., may reach up to the boundary. Since $\EM(\overline{\Omega}) \subset \EM(\Omega)$ and $\Neg(\overline{\Omega}) \subset \Neg(\Omega)$, there is a canonical map $\G(\overline{\Omega}) \to \G(\Omega)$. However, this map is not injective, as follows from the fact that $\Neg(\Omega)\cap \EM(\overline{\Omega}) \ne \Neg(\overline{\Omega})$.

\emph{Restrictions to open subsets.}
Let $\omega$ be an open subset of $\Omega$ and $U\in\cG(\Omega)$. Then the restriction $U|\omega$, obtained by restriction of representatives, is a well defined element of $\cG(\omega)$. The \emph{support} of a generalized function $U\in\cG(\Omega)$, denoted by $\supp U$, is the complement of the largest open set $\omega\subset\Omega$ such that
$U|\omega = 0$. An analogous definition applies to elements $U\in\G(\overline{\Omega})$ with the sets $\omega$, on which $U$ vanishes, taken as open in $\overline{\Omega}$.
Similarly, the restriction of $U\in\cG(\Omega)$ or $U\in\G(\overline{\Omega})$ to lower dimensional linear subspaces can be defined. This gives a meaning to the initial values of elements of $\cG([0,T]\times \R)$ at $t=0$.

\emph{The ring of generalized numbers.} We let $\EM$ be the space of nets $(r_\varepsilon)_{\varepsilon\in(0,1]}$ of real numbers such that $|r_\eps| = O(\varepsilon^b)$ as
$\varepsilon \to 0$ for some $b\in\R$. Similarly, $\Neg$ comprises those sequences which are $O(\varepsilon^a)$ as
$\varepsilon \to 0$ for every $a\geq 0$. The factor space $\widetilde{\R} = \EM/\Neg$ is the \emph{Colombeau ring of generalized numbers}.

\emph{Generalized functions of bounded type.}
Let $\Omega$ be an open subset of $\R^n$ and $L$ a subset of $\Omega$. A generalized function $U$ from $\cG(\Omega)$ is called of
\emph{bounded type on $L$}, if it has a representative $(u_{\varepsilon})_{\varepsilon \in (0,1]}$ such that
\begin{equation}\label{eq:boundedtype}
   \sup_{x\in L}|u_\eps(x)| = O(1) \quad {\rm as}\ \varepsilon \to 0.
\end{equation}
The subalgebra $\cG^0(\Omega)$ comprises the generalized functions from $\cG(\Omega)$ all whose derivatives of any order are of bounded type on compact sets. The bounded type property is defined similarly for generalized functions in $\cG(\overline{\Omega})$, as is the subalgebra $\cG^0(\overline{\Omega})$.

While $\cG(\Omega)$ is invariant under smooth maps of polynomial growth, it is important to note that $\cG^0(\Omega)$ is invariant under arbitrary smooth maps:
\begin{remark}
(a) If $f\in \mathcal{O}_M(\mathbb{R})$ and $U\in \cG(\Omega)$, then $f(U)$ is a well defined element of $\cG(\Omega)$. That is, if $(u_\varepsilon)_{\varepsilon\in(0.1]}$ is a representative of $U$, then $(f(u_\varepsilon))_{\varepsilon\in(0.1]}$ belongs to $\EM(\Omega)$ and its class in $\cG(\Omega)$ does not depend on the choice of representative of $U$.\\
(b) If $f\in \mathcal{C}^\infty(\mathbb{R})$ and $U\in \cG^0(\Omega)$, then $f(U)$ is a well defined element of $\cG^0(\Omega)$.\\
The first assertion can be found already in \cite{Colombeau:1984,Colombeau:1985}; the second assertion is proved e.g. in \cite{O:2006}.
\end{remark}

\emph{The sharp topology.} The sharp topology, introduced in \cite{B:1990}, can be defined through its family of neighborhoods $V(K,p,q)$, where $K$ is a compact subset of $\Omega$, $p \in \N$ and $q \geq 0$. An element $U$ of $\G(\Omega)$
belongs to $\cV(K,p,q)$ if it has a representative $(u_{\varepsilon})_{\varepsilon \in (0,1]}$ such that
\begin{equation}\label{eq:nbhd}
   \sup_{x \in K} \max_{|\alpha|\leq p} |\partial^{\alpha} u_{\varepsilon}(x)| = O(\varepsilon^q) \quad {\rm as}\ \varepsilon \to 0.
\end{equation}
\begin{lemma}\label{lemma : closed}
$\cG^0(\Omega)$ is a closed subspace of $\cG(\Omega)$.
\end{lemma}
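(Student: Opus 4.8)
The plan is to reduce the statement to a single representative-independent reformulation of membership in $\cG^0(\Omega)$ and then to exploit the additive structure of the sharp neighborhoods. First I would record that $\cG^0(\Omega)$ is a real-linear subspace (indeed a subalgebra, as already noted), so that ``closed subspace'' only requires proving topological closedness. The crucial preliminary observation is that, for a fixed $U\in\cG(\Omega)$, a fixed $K\Subset\Omega$ and a fixed multi-index $\alpha$, the property $\sup_{x\in K}|\partial^\alpha u_\eps(x)| = O(1)$ does not depend on the choice of representative $(u_\eps)_{\eps\in(0,1]}$ of $U$: any two representatives differ by an element of $\Neg(\Omega)$, whose derivatives are $O(\eps^a)$ for every $a\ge 0$ and in particular $O(1)$ on $K$. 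Consequently $U\in\cG^0(\Omega)$ if and only if some (equivalently, every) representative satisfies $\sup_{x\in K}|\partial^\alpha u_\eps(x)| = O(1)$ for all $K\Subset\Omega$ and all $\alpha\in\N_0^n$.

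With this in hand the closedness argument is short. Let $U$ lie in the sharp closure of $\cG^0(\Omega)$, and fix $K\Subset\Omega$ and $\alpha\in\N_0^n$. I would test $U$ against the basic neighborhood $V(K,|\alpha|,0)$: by definition of the closure there is $W\in\cG^0(\Omega)$ with $U-W\in V(K,|\alpha|,0)$, that is, a representative $(z_\eps)$ of $U-W$ with $\sup_{x\in K}\max_{|\beta|\le|\alpha|}|\partial^\beta z_\eps(x)| = O(\eps^0) = O(1)$. Choosing a representative $(w_\eps)$ of $W$ whose derivatives are of bounded type on $K$ (possible since $W\in\cG^0(\Omega)$), the sum $(w_\eps+z_\eps)$ is a representative of $U$ with $\sup_{x\in K}|\partial^\alpha(w_\eps+z_\eps)(x)| = O(1)$. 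By the representative-independence just established, $\partial^\alpha U$ is of bounded type on $K$. Since $K$ and $\alpha$ were arbitrary, $U\in\cG^0(\Omega)$, which proves closedness.

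The one point requiring care — and the only place the argument could go wrong if handled loosely — is the juxtaposition of the two ``there exists a representative'' quantifiers, one in the definition of bounded type and one in the definition of the neighborhoods $V(K,p,q)$: a priori the representative witnessing $U-W\in V(K,|\alpha|,0)$ and the representative witnessing bounded type of $W$ need not coincide. The representative-independence observation is exactly what licenses adding them and reading off the conclusion for $U$. I note that the sharp topology is metrizable, so one could instead run the argument through a convergent sequence $U_j\to U$ with $U_j\in\cG^0(\Omega)$; this changes nothing essential. Finally, it is worth stressing that $\cG^0(\Omega)$ is a subspace over the scalar field $\R$ but not a $\widetilde{\R}$-submodule, since multiplication by an unbounded generalized number such as the class of $(\eps^{-1})_\eps$ destroys boundedness; the topological closedness established here is nonetheless precisely what yields completeness of $\cG^0(\Omega)$ as an ultra-metric space, the property needed for the later contraction argument.
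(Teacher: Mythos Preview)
Your proof is correct and is essentially the contrapositive of the paper's argument: the paper shows that if $U\notin\cG^0(\Omega)$ then the basic neighborhood $U+\cV(K,|\alpha|,0)$ misses $\cG^0(\Omega)$, whereas you show directly that any $U$ in the closure meets $\cG^0(\Omega)$ in every such neighborhood and hence lies in $\cG^0(\Omega)$. Both hinge on the same observation---membership in $\cV(K,|\alpha|,0)$ is exactly an $O(1)$ bound on derivatives up to order $|\alpha|$ on $K$, which is additive---and your explicit treatment of representative-independence supplies the detail the paper leaves implicit.
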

\begin{proof}
If $U\not\in \cG^0(\Omega)$, there is a representative $(u_{\varepsilon})_{\varepsilon \in (0,1]}$, a compact subset $K$ of $\Omega$ and a multi-index $\alpha$ such that
\[
   \limsup_{\eps\to 0} \sup_{x\in K} |\partial^\alpha u_\eps(x)| = \infty.
\]
But then the neighborhood $U + \cV(K,|\alpha|,0)$ does not intersect $\cG^0(\Omega)$.
\end{proof}

Let $K$ be a compact subset of $\Omega$. We denote by $\cG_K(\Omega)$ the subspace of elements of $\cG(\Omega)$ whose support is contained in $K$.
\begin{lemma}\label{lem:supports}
$\cG_K(\Omega)$ is a closed subset of $\cG(\Omega)$.
\end{lemma}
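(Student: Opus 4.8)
The plan is to show that the complement $\cG(\Omega)\setminus\cG_K(\Omega)$ is open in the sharp topology. So I would take any $U\in\cG(\Omega)$ with $\supp U\not\subset K$ and exhibit a sharp neighborhood of $U$ that avoids $\cG_K(\Omega)$ entirely. Since $\supp U$ is by definition the complement of the largest open set on which $U$ vanishes, the assumption $\supp U\not\subset K$ means there is a point $x_0\in\supp U$ with $x_0\notin K$. Because $K$ is compact (hence closed) and $x_0\notin K$, I can choose a small closed ball $\overline{B}\subset\Omega$ centered at $x_0$ that is disjoint from $K$; its interior is an open neighborhood of $x_0$.

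The key step is to extract, from the fact that $x_0\in\supp U$, a quantitative lower bound witnessing that $U$ is genuinely nonzero near $x_0$. Since $U$ does not vanish on any neighborhood of $x_0$, for every representative $(u_\eps)$ the net $(u_\eps)$ restricted to the interior of $\overline{B}$ is not negligible. Concretely, there exist a compact set $L\subset\overline{B}$, a multi-index $\beta$, an exponent $a\ge 0$, and a sequence $\eps_k\to 0$ such that
\begin{equation}\label{eq:nonneg}
  \sup_{x\in L}|\partial^\beta u_{\eps_k}(x)| \neq O(\eps_k^{a}),
\end{equation}
i.e. the negligibility estimate fails along $\eps_k$ at some fixed order $a$. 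I would set $q=a$ and $p=|\beta|$ and claim that the neighborhood $U+\cV(L,p,q)$ does not meet $\cG_K(\Omega)$. Indeed, any $W\in\cG_K(\Omega)$ vanishes on the interior of $\overline{B}$ (as $L\cap K=\emptyset$ and $L$ lies in $\Omega\setminus K$), so every representative of $W$ is negligible on $L$; hence $U-W$ inherits the non-negligibility \eqref{eq:nonneg} on $L$ and therefore $U-W\notin\cV(L,p,q)$, which means $W\notin U+\cV(L,p,q)$.

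The main obstacle is making the phrase ``$U$ does not vanish near $x_0$'' precise in a form usable against the sharp topology, and in particular getting the quantifiers right: vanishing of a Colombeau element is representative-independent, but the witnessing order $a$ and index $\beta$ produced by the failure of negligibility must be pinned down uniformly so that the radius $q$ of the sharp ball $\cV(L,p,q)$ is a fixed number rather than something that drifts. I would handle this by working with a single fixed representative of $U$ and invoking the definition of support directly: $U|_{\mathrm{int}\,\overline{B}}\neq 0$ in $\cG(\mathrm{int}\,\overline{B})$, which by the definition of $\Neg$ yields precisely one triple $(L,\beta,a)$ for which the $O(\eps^a)$ estimate fails, and that triple is exactly what feeds into $\cV(L,|\beta|,a)$. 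The remaining verifications — that elements of $\cG_K(\Omega)$ are negligible on $L$, and that subtracting a net negligible on $L$ cannot cancel a non-negligibility of order $a$ — are routine once the triple is fixed.
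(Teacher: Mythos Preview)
Your proposal is correct and follows essentially the same approach as the paper: show the complement is open by producing, from the failure of negligibility of $U$ on a compact $L\subset\Omega\setminus K$, an explicit sharp neighborhood $U+\cV(L,|\beta|,q)$ disjoint from $\cG_K(\Omega)$. The only cosmetic differences are that the paper skips the intermediate point $x_0$ and ball $\overline{B}$, going straight to the compact $L$ in $\Omega\setminus K$, and that the paper takes the neighborhood radius to be $2q$ and argues directly that any $V$ in this neighborhood has support meeting $L$, whereas you take radius $a$ and use the (stronger) fact that any $W\in\cG_K(\Omega)$ is actually negligible on $L$---both arguments are valid and lead to the same conclusion.
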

\begin{proof}
Suppose that $U$ does not belong to $\cG_K(\Omega)$ and let $(u_\varepsilon)_{\varepsilon\in(0,1]}$ be a representative of $U$. Then there is a compact subset $L$ of the complement of $K$ in $\Omega$, a multi-index $\alpha$ and $q\geq 0$ such that $u_\varepsilon$ is not $O(\varepsilon^q)$ on $L$. This implies that there is a subsequence
$\varepsilon_k \to 0$ such that
\[
   \sup_{x\in L}|\partial^\alpha u_\varepsilon(x)| \geq \varepsilon_k^q.
\]
If $V-U \in \cV(L,|\alpha|,2q)$, then $\sup_{x\in L}|\partial^\alpha v_\varepsilon(x)| \geq \frac12 \varepsilon_k^q$ for sufficiently large $k$, and so the support of $V$ has a non-empty intersection with $L$. Thus the neighborhood $U + \cV(L,|\alpha|,2q)$ of $U$ does not intersect $\cG_K(\Omega)$, showing that $\cG_K(\Omega)$ is closed.
\end{proof}
\emph{Internal sets.} Given a net of subsets $(A_\varepsilon)_{\varepsilon\in(0,1]}$ of $\cG(\Omega)$, we may consider the set $A$ of all $U\in\cG(\Omega)$ having
a representative $(u_\varepsilon)_{\varepsilon\in(0,1]}$ such that $u_\varepsilon\in A_\varepsilon$ for sufficiently small $\varepsilon$. Sets $A$ of this form are called
\emph{internal subsets} of $\cG(\Omega)$.
\begin{remark}\label{rem:internal}
Internal subsets of $\cG(\Omega)$ are closed with respect to the sharp topology. This has been proven in \cite{MOVernaeve:2008}.
\end{remark}
We are going to recall that the sharp topology on $\cG(\Omega)$ can be defined in terms of an ultra-metric. This construction is due to \cite{Scarpalezos:1998, Scarpalezos:2000} and has been further developed by \cite{Garetto:2005}.
Let $(K_n)_{n\in\N}$ be an exhausting sequence of compact subsets of $\Omega$. The seminorms $\mu_n$ on $\cC^\infty(\Omega)$ are given by
\begin{equation}\label{eq : seminorms}
   \mu_n(f) = \sup_{x\in K_n} \sup_{|\alpha|\leq n}|\partial^\alpha f(x)|.
\end{equation}
Valuations $\nu_n : \EM(\Omega)\to(-\infty,\infty]$ can be defined by
\[
   \nu_n\big((u_{\varepsilon})_{\varepsilon \in (0,1]}\big) = \sup_{b\in\R} \{\mu_n(u_\eps) = O(\eps^b) \ {\rm as}\ \varepsilon \to 0\}.
\]
Obviously, $(u_{\varepsilon})_{\varepsilon \in (0,1]}$ belongs to $\Neg(\Omega)$ if and only if $\nu_n\big((u_{\varepsilon})_{\varepsilon \in (0,1]}\big) = \infty$ for all $n$. Thus the valuations can be extended to the factor algebra
$\cG(\Omega)$. The following properties hold:
\begin{itemize}
\item[(a)] $\nu_n(U+V) \geq \min \big(\nu_n(U),\nu_n(V)\big)$;

\item[(b)] $\nu_n(UV) \geq \nu_n(U) + \nu_n(V)$;

\item[(c)] if $m\geq n$, then $\nu_m(U) \leq \nu_n(U)$.
\end{itemize}
By means of these valuations, a family of ultra-pseudo-seminorms on $\cG(\Omega)$ can be defined by
\[
    p_n(U) = \exp(-\nu_n(U)).
\]
They have the properties
\begin{itemize}
\item[(a)] $p_n(U+V) \leq \max \big(p_n(U),p_n(V)\big)$;

\item[(b)]  $p_n(UV) \leq p_n(U)p_n(V)$;

\item[(c)] $p_n(\lambda U) = p_n(U)$ for all $\lambda \in \R$.
\end{itemize}
Finally, an ultra-metric can be defined on $\cG(\Omega)$ by
\[
   d(U,V) = \sum_{n=0}^\infty 2^{-n-1}\min\big(p_n(U-V),1\big).
\]
It is an easy exercise to show that the topology induced on $\cG(\Omega)$ by the ultra-metric $d$ is the same as the one given by the neighborhoods \eqref{eq:nbhd}.
Further, it is known \cite{ Garetto:2005, NPS:1998, Scarpalezos:1998} that $\cG(\Omega)$ with the uniform structure induced by $d$ is complete, hence a complete ultra-metric space.
By Lemma \ref{lemma : closed} the same is true of $\cG^0(\Omega)$.

\emph{Contractions in $\cG^0(\Omega)$.} First, observe that if $U\in \cG^0(\Omega)$, then $\nu_n(U) \geq 0$ for all $n$, hence
$p_n(U)\leq 1$ for all $n$, and
\begin{equation}\label{eq:contrationG0}
   d(U,V) = \sum_{n=0}^\infty 2^{-n-1}p_n(U-V).
\end{equation}
In particular, $\cG^0(\Omega)$ is contained in the unit ball around zero in $\cG(\Omega)$.

Let $\mathcal{M}$ be a subset of $\cG(\Omega)$. As usual, a map $F: \mathcal{M}\to\mathcal{M}$ is called a \emph{contraction}, if there is $\kappa < 1$ such that
\begin{equation*}
   d(F(U),F(V)) \leq \kappa d(U,V)
\end{equation*}
for all $U,V\in \mathcal{M}$.

It will be useful to elaborate on sufficient conditions for $F$ to be a contraction on subsets $\mathcal{M}$ of $\cG^0(\Omega)$. First, if there is $\kappa < 1$ such that
\begin{equation}\label{eq:contraction_pn}
    p_n(F(U)-F(V)) \leq \kappa p_n(U-V)
\end{equation}
for all $U,V\in \mathcal{M}$ and $n\in\N$, then $F$ is a contraction, as follows from the formula \eqref{eq:contrationG0}, valid for $U,V\in \cG^0(\Omega)$.
This in turn is equivalent with
\begin{equation*}
  \nu_n\big(F(U) - F(V)\big) \geq -\log\kappa + \nu_n(U-V)
\end{equation*}
for all $U,V\in \mathcal{M}$ and $n\in\N$. Since $-\log\kappa > 0$, this means -- roughly speaking -- that $F$ raises the regularity.
\begin{proposition}\label{prop:contraction}
Let $\mathcal{M}\subset \cG^0(\Omega)$ and $F:\mathcal{M}\to\mathcal{M}$. Assume that $F$ is of the form $F(U) = EG(U)$, where $E$ is a generalized real number such that $\nu_0(E) > 0$  and $G$ is Lipschitz continuous with respect to all $\mu_n$, that is, there are constants $C_n \geq 0$ such that
if $U,V\in \cG(\Omega)$ and $(u_{\varepsilon})_{\varepsilon \in (0,1]}$, $(v_{\varepsilon})_{\varepsilon \in (0,1]}$ are representatives of $U$ and $V$, there is
$\eps_n > 0$ such that
\begin{equation}\label{eq:Lipschitz_n}
   \mu_n(G(u_\eps) - G(v_\eps)) \leq C_n \mu_n(u_\eps - v_\eps)
\end{equation}
for all $\eps\leq \eps_n$. Then $F$ is a contraction on $\mathcal{M}$.
\end{proposition}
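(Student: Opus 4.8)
The plan is to verify the sufficient condition for a contraction that was isolated in the discussion preceding the statement: it is enough to produce a single $\kappa < 1$ with $p_n(F(U)-F(V)) \le \kappa\, p_n(U-V)$ for all $U,V \in \mathcal{M}$ and all $n\in\N$, since then formula \eqref{eq:contrationG0} — applicable because both $U-V$ and $F(U)-F(V)$ lie in $\cG^0(\Omega)$ — yields $d(F(U),F(V)) \le \kappa\, d(U,V)$ by termwise comparison of the defining series. Passing to the valuations via $\nu_n = -\log p_n$, the target inequality reads $\nu_n(F(U)-F(V)) \ge -\log\kappa + \nu_n(U-V)$, so I must exhibit an additive gain $-\log\kappa > 0$ that is uniform in $n$.

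First I would record the algebraic identity $F(U)-F(V) = E\bigl(G(U)-G(V)\bigr)$ and split the valuation using property (b) of the $\nu_n$, namely $\nu_n\bigl(E(G(U)-G(V))\bigr) \ge \nu_n(E) + \nu_n\bigl(G(U)-G(V)\bigr)$. Because $E$ is a generalized number, every derivative of positive order of a representative $(E_\varepsilon)$ vanishes, so $\mu_n(E_\varepsilon) = |E_\varepsilon|$ for every $n$; hence $\nu_n(E) = \nu_0(E)$ for all $n$. This is precisely where the hypothesis $\nu_0(E) > 0$ supplies the uniform gain.

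The heart of the argument is the estimate $\nu_n\bigl(G(U)-G(V)\bigr) \ge \nu_n(U-V)$, which I would extract from the Lipschitz hypothesis \eqref{eq:Lipschitz_n}. Fixing representatives $(u_\varepsilon),(v_\varepsilon)$ of $U,V$, the families $(G(u_\varepsilon)),(G(v_\varepsilon))$ represent $G(U),G(V)$. For any $b < \nu_n(U-V)$ one has $\mu_n(u_\varepsilon - v_\varepsilon) = O(\varepsilon^b)$ by the definition of $\nu_n$, whence \eqref{eq:Lipschitz_n} gives $\mu_n(G(u_\varepsilon)-G(v_\varepsilon)) \le C_n\,\mu_n(u_\varepsilon - v_\varepsilon) = O(\varepsilon^b)$ for $\varepsilon \le \varepsilon_n$; the fixed constant $C_n$ does not affect the order in $\varepsilon$. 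Taking the supremum over all such $b$ gives the claimed inequality. The same estimate, applied to two representatives of a single element, shows that $G$ sends negligible differences to negligible ones, so $G(U)$ is well defined independently of the representative and $\nu_n(G(U)-G(V))$ is meaningful. Combining the three steps yields $\nu_n(F(U)-F(V)) \ge \nu_0(E) + \nu_n(U-V)$ for every $n$, and choosing $\kappa = \exp(-\nu_0(E)) < 1$ finishes the proof.

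I expect the main delicacy to lie in this last step: keeping the bookkeeping of valuations clean, and in particular confirming that the $n$-dependent Lipschitz constants $C_n$ and thresholds $\varepsilon_n$ cannot spoil the uniform-in-$n$ gain. They cannot, precisely because the gain originates solely from $\nu_0(E)$ and is independent of the behaviour of $G$; the Lipschitz bound contributes only the order-preserving inequality $\nu_n(G(U)-G(V)) \ge \nu_n(U-V)$, together with the representative-independence of $G$.
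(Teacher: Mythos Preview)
Your proposal is correct and follows essentially the same route as the paper: derive $\nu_n(G(U)-G(V)) \ge \nu_n(U-V)$ from the Lipschitz bound \eqref{eq:Lipschitz_n}, combine with the multiplicative property $\nu_n(E\cdot W)\ge\nu_n(E)+\nu_n(W)$ and the observation $\nu_n(E)=\nu_0(E)$, and conclude with $\kappa=\exp(-\nu_0(E))$. If anything, you are slightly more explicit than the paper in justifying $\nu_n(E)=\nu_0(E)$ and in noting that the Lipschitz hypothesis forces representative-independence of $G$.
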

\begin{proof}
We begin by observing that if $\mu_n(u_\eps - v_\eps) = O(\eps^b)$, then also $C_n\mu_n(u_\eps - v_\eps) = O(\eps^b)$. Inequality \eqref{eq:Lipschitz_n} implies that
$\mu_n(G(u_\eps) - G(v_\eps)) = O(\eps^b)$ as well. It follows that
\[
\{b\in\R:\mu_n(u_\eps - v_\eps) = O(\eps^b)\} \subset \{b\in\R: \mu_n(G(u_\eps) - G(v_\eps)) = O(\eps^b)\}.
\]
Therefore, the supremum of the left-hand side is less or equal to the supremum of the right-hand side. This means that $\nu_n(U-V) \leq \nu_n\big(G(U) - G(V)\big)$, which in turn implies
\[
  \nu_n\big(F(U) - F(V)\big) \geq \nu_n(E) + \nu_n\big(G(U) - G(V)\big) \geq \nu_0(E) + \nu_n(U - V)
\]
using that for generalized real numbers $\nu_n(E) = \nu_0(E)$ for all $n$.
This in turn yields that
\[
   p_n\big(F(U) - F(V)\big) \leq \exp(-\nu_0(E))p_n(U-V),
\]
thus \eqref{eq:contraction_pn} holds with $\kappa = \exp(-\nu_0(E)) < 1$.
\end{proof}

\section{Global existence of regular solutions}
\label{sec : 3}

This section is devoted to establishing existence and uniqueness of a solution in $\mathcal{G}^0([0,T]\times \mathbb{R}^d)$ to the Cauchy problem for nonlinear wave equations with small nonlinearity in space dimensions $d = 1,2,3$. We consider the problem
\begin{equation}\label{eq : regular nonlinear wave equation}
\begin{array}{lr}
\partial_t^2U - \Delta U = Ef(U) & \mbox{in}\ \G^0([0,T]\times\mathbb{R}^d),\vspace{4pt} \\
U|_{t = 0} = U_0,\quad \partial_tU|_{t = 0} = U_1 & \mbox{in}\ \G^0(\mathbb{R}^d). \vspace{4pt} \\
\end{array}
\end{equation}
We make the following assumptions.
\begin{itemize}
\item[(A1)]
$E$ is a generalized number with positive valuation $\nu_0(E) > 0$;
\item[(A2)] $f$ belongs to $\mathcal{C}^\infty(\mathbb{R})$ and satisfies $f(0) = 0$;
\item[(A3)] $U_0$, $U_1$ belong to $\mathcal{G}^0(\mathbb{R}^d)$ and are compactly supported, with support in the ball
$B_0 = \{x\in\mathbb{R}^d: |x|\leq r\}$ of radius $r$, for some $r\geq 0$.
\end{itemize}
Note that the generalized number $E$ with representative $(e_\varepsilon)_{\eps\in(0,1]}$ takes the role of the small parameter $h(\varepsilon)$ informally introduced in the introduction.

We set
\[
   K_0 = \{(t,x)\in[0,T]\times\mathbb{R}^d:|x|\leq t + r\}.
\]
The (classical) solution operator for the linear wave equation with initial data $u_0, u_1$ and right-hand side $h$ is denoted by
$ L(u_0,u_1,h)$ (see appendix).
\begin{theorem}\label{thm : existence}
Let $d = 1,2,3$ and let the assumptions $(A1),$ $(A2)$ and $(A3)$ hold. Then for any $T > 0,$ problem \eqref{eq : regular nonlinear wave equation}
has a unique solution $U \in \mathcal{G}^0([0,T] \times \mathbb{R}^d)$ with support in $K_0$.
\end{theorem}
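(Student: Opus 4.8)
The plan is to recast \eqref{eq : regular nonlinear wave equation} as a fixed point problem and apply the Banach fixed point theorem in the sharp topology, the contraction property being supplied by Proposition~\ref{prop:contraction}. Writing $L$ for the linear solution operator, the superposition principle for the linear wave equation (valid representative-wise, since the representatives are classical smooth solutions) shows that $U$ solves \eqref{eq : regular nonlinear wave equation} if and only if it is a fixed point of
\begin{equation*}
   F(U) = L(U_0,U_1,0) + E\, L\big(0,0,f(U)\big) =: W + E\,G(U).
\end{equation*}
Since $f$ is only locally Lipschitz, I would first replace $f$ by a function $\tilde f\in\cC^\infty(\R)$ that agrees with $f$ on a large interval $[-2R,2R]$ and has all derivatives bounded on $\R$ (still with $\tilde f(0)=0$); here $R$ is chosen so that the homogeneous part $W$, which lies in $\cG^0$ by the linear estimates, satisfies $\sup_{K_0}|w_\varepsilon|\le R$ for small $\varepsilon$. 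The final step will be to check that the resulting solution stays within $[-2R,2R]$, so that it solves the original equation as well.

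Next I would set up the complete metric space on which $F$ acts, namely
\[
   \mathcal{M} = \{U\in\cG^0([0,T]\times\R^d):\supp U\subset K_0\}.
\]
By Lemma~\ref{lemma : closed} and Lemma~\ref{lem:supports} (note that $K_0$ is compact) this is a closed subset of the complete ultra-metric space $\cG([0,T]\times\R^d)$, hence itself complete. To see that $F(\mathcal{M})\subset\mathcal{M}$ I would argue: $\tilde f(U)\in\cG^0$ by invariance of $\cG^0$ under smooth maps, and $\supp\tilde f(U)\subset K_0$ because $\tilde f(0)=0$; the linear estimates of the appendix then put $L(0,0,\tilde f(U))$ and $W$ into $\cG^0$, while finite propagation speed — using that $K_0$ is the forward light cone over $B_0$ and is therefore causal — keeps their supports inside $K_0$; finally multiplication by the generalized number $E$ of positive valuation preserves bounded type.

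For the contraction I would apply Proposition~\ref{prop:contraction} with $G(U)=L(0,0,\tilde f(U))$, observing that the additive constant $W$ cancels in $F(U)-F(V)=E\big(G(U)-G(V)\big)$ and so does not affect the estimate. Assumption $(A1)$ gives $\nu_0(E)>0$, and the Lipschitz bound \eqref{eq:Lipschitz_n} follows by composing two estimates: the linear bound $\mu_n(L(0,0,h))\le C(n,T)\,\mu_n(h)$ from the appendix (crucially \emph{without loss of derivatives}, time derivatives being traded for spatial ones via the equation), and the chain-rule bound $\mu_n(\tilde f(u_\varepsilon)-\tilde f(v_\varepsilon))\le C_n\,\mu_n(u_\varepsilon-v_\varepsilon)$, valid because $\tilde f$ has globally bounded derivatives and $u_\varepsilon,v_\varepsilon$ are of bounded type (the constants $C_n$ may depend on $U,V$, which is harmless, since the proof of Proposition~\ref{prop:contraction} only uses the resulting inequality $\nu_n(U-V)\le\nu_n(G(U)-G(V))$). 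The decisive point is that the contraction constant $\kappa=\exp(-\nu_0(E))<1$ comes \emph{solely} from the smallness of $E$; the size of the $C_n$ is irrelevant, which is exactly why no growth or sign condition on $f$ is needed. Banach's theorem then yields a unique fixed point $U\in\mathcal{M}$ of the modified equation.

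It remains to remove the cutoff and prove uniqueness. From $u_\varepsilon=w_\varepsilon+e_\varepsilon\,L(0,0,\tilde f(u_\varepsilon))$, the global bound on $\tilde f$ gives $\sup_{K_0}|L(0,0,\tilde f(u_\varepsilon))|\le C_1$, so $\sup_{K_0}|u_\varepsilon|\le R+|e_\varepsilon|C_1\le 2R$ for small $\varepsilon$, since $|e_\varepsilon|=O(\varepsilon^{b})\to 0$; hence $\tilde f(U)=f(U)$ and $U$ solves \eqref{eq : regular nonlinear wave equation}. For uniqueness, any two solutions in $\mathcal{M}$ are of bounded type, so both take values in some common interval $[-R^*,R^*]$ for small $\varepsilon$; choosing the cutoff level $\ge R^*$ makes both solutions fixed points of the same contraction, whence they coincide. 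I expect the genuine technical labour — and the main obstacle — to lie in the linear $L^\infty$-estimates in dimensions $d=1,2,3$ that give $G$ its Lipschitz property without derivative loss and guarantee $F(\mathcal{M})\subset\mathcal{M}$; this is precisely the content deferred to the appendix.
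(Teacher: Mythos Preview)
Your argument is correct and follows the same overarching strategy as the paper (Banach fixed point for $F(U)=L(U_0,U_1,0)+E\,L(0,0,f(U))$ in the sharp topology, with Proposition~\ref{prop:contraction} supplying the contraction constant $\kappa=\exp(-\nu_0(E))$), but you handle the merely local Lipschitz nature of $f$ differently. You truncate $f$ to a $\tilde f$ with globally bounded derivatives, run the fixed point on the simple set $\mathcal M=\{U\in\cG^0:\supp U\subset K_0\}$, and remove the truncation a posteriori using the zeroth-order bound $\sup_{K_0}|u_\varepsilon|\le R+|e_\varepsilon|C_1+o(1)\le 2R$. The paper instead leaves $f$ untouched and shrinks the domain: it adds the internal condition $\mathcal M_1=\{\mu_n(u_\varepsilon-L(u_{0\varepsilon},u_{1\varepsilon},0))\le 1\ \text{for small }\varepsilon\}$, which is closed by Remark~\ref{rem:internal} and furnishes the a priori $L^\infty$-bound needed to control $f',f'',\dots$ directly. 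Your route is slightly more elementary (no internal-set machinery), at the cost of the extra truncation-removal step; the paper's route avoids modifying $f$ but must verify separately, for uniqueness, that an arbitrary $\cG^0$-solution automatically lands in $\mathcal M_1$. Both proofs rely on the same observation you make explicit: the Lipschitz constants $C_n$ in \eqref{eq:Lipschitz_n} may depend on $U,V$ without harm, since only the valuation inequality $\nu_n(G(U)-G(V))\ge\nu_n(U-V)$ is used.
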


\begin{proof}
We define the map $F:\mathcal{G}^0([0,T] \times \mathbb{R}^d)\to \mathcal{G}([0,T] \times \mathbb{R}^d)$ by
\begin{equation*}
    F(U) = L(U_0,U_1,Ef(U))\ =\ L(U_0,U_1,0) + EG(U)
\end{equation*}
with
\[
   G(U) = L(0,0,f(U)).
\]
We wish to show, using Proposition\;\ref{prop:contraction}, that $F$ is a contraction on a suitable closed subset $\mathcal{M}$ of $\mathcal{G}^0([0,T] \times \mathbb{R}^d)$.
As in the appendix, we take $K_0$ as a starting element of an exhausting sequence $(K_n)_{n\in\N}$ of compact subsets of $[0,T]\times \mathbb{R}^d$.
The seminorms $\mu_n$ on $\mathcal{C}^\infty([0,T]\times \mathbb{R}^d)$ are defined as in \eqref{eq : seminorms}.
Further, the seminorms $\mu_n^0$ on $\mathcal{C}^\infty(\mathbb{R}^d)$ are defined in an analogous way (see appendix).
In what follows, $(u_{0\varepsilon})_{\varepsilon\in(0,1]}$ and $(u_{1\varepsilon})_{\varepsilon\in(0,1]}$ denote representatives of $U_0$ and $U_1$, respectively.

We let $\mathcal{M}_1$ comprise the elements $U$ of $\mathcal{G}([0,T] \times \mathbb{R}^d)$ having a representative $(u_\varepsilon)_{\varepsilon\in(0,1]}$
with the property
\begin{itemize}
   \item[] For all $n\in\mathbb{N}$ there is $\varepsilon_n>0$ such that
    $\mu_n\big(u_\varepsilon - L(u_{0\varepsilon},u_{1\varepsilon},0)\big) \leq 1$\\
     for $0< \varepsilon \leq \varepsilon_n$.
\end{itemize}
Clearly, $\mathcal{M}_1$ is an internal subset of $\mathcal{G}([0,T] \times \mathbb{R}^d)$, hence closed with respect to the sharp topology (Remark\;\ref{rem:internal}). Next, $\mathcal{M}_2$ denotes the set of elements of $\mathcal{G}([0,T] \times \mathbb{R}^d)$ with support contained in $K_0$. By Lemma\;\ref{lem:supports}, $\mathcal{M}_2$ is a closed subset of
$\mathcal{G}([0,T] \times \mathbb{R}^d)$ as well. Finally, we set
\[
   \mathcal{M} = \mathcal{M}_1 \cap \mathcal{M}_2 \cap \mathcal{G}^0([0,T] \times \mathbb{R}^d).
\]
By Lemma\;\ref{lemma : closed}, $\mathcal{M}$ is a closed subset of $\mathcal{G}([0,T] \times \mathbb{R}^d)$, hence a complete ultra-metric space.

We first show that $F$ maps $\mathcal{M}$ into itself. Let $U \in \mathcal{M}$ and take a representative $(u_\varepsilon)_{\varepsilon\in (0,1]}$. Using that
$\mathcal{G}^0([0,T] \times \mathbb{R}^d)$ is invariant under superposition with smooth maps, it follows that $f(U)$ belongs to $\mathcal{G}^0([0,T] \times \mathbb{R}^d)$ as well. This means, in particular, that for all $n\in\N$ there are constants $C_n'$ and $\varepsilon_n'$ such that
\[
   \mu_n\big(f(u_\varepsilon)\big) \leq C_n'\quad \mbox{for}\quad 0<\varepsilon\leq\varepsilon_n'.
\]
A similar assertion holds for $\mu_n^0(u_{0\varepsilon})$ and $\mu_n^0(u_{1\varepsilon})$. Recall that $\nu_0(E) > 0$ means that there is $b>0$ such that
\begin{equation}\label{eq : E}
   |e_\varepsilon| = O(\varepsilon^b) \quad \mbox{as}\quad \varepsilon\to 0,
\end{equation}
where $(e_\varepsilon)_{\varepsilon\in(0,1]}$ is a representative of $E$.
Using this and Lemma\;\ref{lem:waveestimates}(b), we conclude that $F(U) = L(U_0,U_1,0) + EL(0,0,f(U))$ belongs to $\mathcal{G}^0([0,T] \times \mathbb{R}^d)$
as well. Also by Lemma\;\ref{lem:waveestimates}(a), $F(U)$ has its support in $K_0$, thus belongs to $\mathcal{M}_2$.

To show that $F(U) \in \mathcal{M}_1$, we have to estimate
\[
   \mu_n\big(L(u_{0\varepsilon}, u_{1\varepsilon}, e_\varepsilon f(u_\varepsilon)) - L(u_{0\varepsilon}, u_{1\varepsilon}, 0)\big)
       = |e_\varepsilon| \mu_n\big(L(0, 0, f(u_\varepsilon))\big).
\]
But the right-hand side will eventually be less or equal to 1, due to \eqref{eq : E} and the fact that $L(0, 0, f(U))$ belongs to $\mathcal{G}^0([0,T] \times \mathbb{R}^d)$.
Combining the assertions, we see that $F(U)$ belongs to $\mathcal{M}$.

Second, in order to show that $F$ is a contraction on $\mathcal{M}$, we invoke Proposition\;\ref{prop:contraction} and just have to establish the Lipschitz estimates \eqref{eq:Lipschitz_n} for
the map $U\to G(U) = L(0,0,f(U))$. Let $U,V$ belong to $\mathcal{M}$ with representatives $(u_\varepsilon)_{\varepsilon\in(0,1]}$, $(v_\varepsilon)_{\varepsilon\in(0,1]}$.
We have to estimate $\mu_n\big(L(0,0,f(u_\varepsilon) - f(v_\varepsilon))\big)$.

For $n=0$ this is fairly straightforward. In fact,
\[
   f(u_\varepsilon) - f(v_\varepsilon) = (u_\varepsilon - v_\varepsilon)\int_0^1 f'(v_\varepsilon + \tau (u_\varepsilon - v_\varepsilon))d\tau.
\]
But $U$ belongs to $\mathcal{M}_1$, so
\[
   \sup_{(t,x)\in K_0}|u_\varepsilon(t,x)| \leq \mu_0\big(L(u_{0\varepsilon},u_{1\varepsilon},0)\big) + 1
      \leq C_0\big(\mu_1^0(u_{0\varepsilon}) + \mu_0^0(u_{1\varepsilon})\big) + 1
\]
by Lemma\;\ref{lem:waveestimates}. Since $U_0$ and $U_1$ belong to $\mathcal{G}^0(\mathbb{R}^d)$, the right-hand side is bounded by a constant. The same argument applies to $v_\varepsilon$.
But $f'$ is bounded on bounded sets, so there is a constant $C>0$ such that
\[
   \mu_0\big(f'(u_\varepsilon + \tau (u_\varepsilon - v_\varepsilon))\big) \leq C
\]
and so
\[
  \mu_0\big(f(u_\varepsilon) - f(v_\varepsilon)\big) \leq C\mu_0\big(u_\varepsilon - v_\varepsilon\big).
\]
From Lemma\;\ref{lem:waveestimates},
\[
  \mu_0\big(L(0,0,f(u_\varepsilon) - f(v_\varepsilon))\big) \leq CC_0\mu_0\big(u_\varepsilon - v_\varepsilon\big)
\]
and this is the desired estimate of order zero. To indicate the estimate of order one, denote by $\partial_x$ a spatial derivative. Using the explicit formulas in
the appendix, it holds that
$\partial_x L(0,0,f(u_\varepsilon) - f(v_\varepsilon)) =  L(0,0,\partial_x(f(u_\varepsilon) - f(v_\varepsilon)))$ and
\begin{eqnarray*}
  \partial_x(f(u_\varepsilon) - f(v_\varepsilon)) & = & f'(u_\varepsilon)(\partial_x u_\varepsilon - \partial_x v_\varepsilon)\\
      && +\ (u_\varepsilon - v_\varepsilon)\,\partial_x v_\varepsilon\int_0^1 f''(v_\varepsilon + \tau (u_\varepsilon - v_\varepsilon))d\tau.
\end{eqnarray*}
One may now apply the same argument as for the zero order estimate, observing that $\partial_x v_\varepsilon$ is bounded on $K_0$ since $V$ belongs to $\mathcal{M}$.
The time derivatives are yet more complicated, because $\partial_t L(0,0,f(u_\varepsilon))$ involves also the evaluation of $f(u_\varepsilon)$ at $t=0$. Nevertheless, the argument can be continued recursively, yielding the estimates \eqref{eq:Lipschitz_n} for every $n$.

We conclude that there is a unique $U\in\cM$ such that $U = F(U)$. Written out in terms of representatives, this means that
\[
   u_\eps = L\big(u_{0\varepsilon}, u_{1\varepsilon}, e_\varepsilon f(u_\eps)\big) + n_\eps,
\]
where the net $(n_\eps)_{\eps\in(0,1]}$ belongs to $\cN([0,T] \times \mathbb{R}^d)$. It follows from the formulas in the appendix that
\begin{equation}\label{eq : solution}
    \partial_t^2u_\eps - \Delta u_\eps = e_\varepsilon f(u_\eps) + m_\eps,
\end{equation}
where $m_\eps = (\p_t^2 - \Delta)n_\eps$ represents an element of $\cN([0,T] \times \mathbb{R}^d)$ as well. It also holds that
$u_\eps|_{t = 0} = u_{0\varepsilon}$ and $\partial_tu_\eps|_{t = 0} = u_{1\varepsilon}$, thus $U$ is the unique solution to problem\;\eqref{eq : regular nonlinear wave equation}
in $\mathcal{M}$.

It remains to show that the solution is unique in $\mathcal{G}^0([0,T] \times \mathbb{R}^d)$ with support in $K_0$. Thus let $W$ be a solution
of \eqref{eq : regular nonlinear wave equation} with the indicated properties. This means that for any representative $(w_\varepsilon)_{\varepsilon\in(0,1]}$, there are elements $(m_\varepsilon)_{\varepsilon\in(0,1]}\in \mathcal{N}([0,T] \times \mathbb{R}^d)$
and $(n_{0\varepsilon})_{\varepsilon\in(0,1]}, (n_{1\varepsilon})_{\varepsilon\in(0,1]}\in \mathcal{N}(\mathbb{R}^d)$ such that
\begin{equation}\label{eq : waverepresentative}
\begin{array}{lr}
\partial_t^2 w_\varepsilon - \Delta w_\varepsilon = e_\varepsilon f(w_\varepsilon) + m_\varepsilon, \\
w_\varepsilon|_{t = 0} = u_{0\varepsilon} + n_{0\varepsilon},\quad \partial_t w_\varepsilon|_{t = 0} = u_{1\varepsilon} + n_{1\varepsilon}.
\end{array}
\end{equation}
Since $w_\varepsilon$ is a classical solution of \eqref{eq : waverepresentative}, it follows that
\[
   w_\varepsilon = L(u_{0\varepsilon} + n_{0\varepsilon}, u_{1\varepsilon} + n_{1\varepsilon},e_\varepsilon f(w_\varepsilon) + m_\varepsilon).
\]
Using the linearity of the solution operator $L$, we conclude that
\begin{equation}\label{eq : Mestimate}
  \mu_n\big(w_\varepsilon - L(u_{0\varepsilon}, u_{1\varepsilon},0)\big)
      \leq |e_\varepsilon| \mu_n\big(L(0,0,f(w_\varepsilon))\big) + \mu_n\big(L(n_{0\varepsilon}, n_{1\varepsilon}, m_\varepsilon)\big).
\end{equation}
Since $W$ belongs to $\mathcal{G}^0([0,T] \times \mathbb{R}^d)$, all seminorms $\mu_n(w_\varepsilon)$ are bounded as $\varepsilon\to 0$. From Lemma\;\ref{lem:waveestimates}, the seminorms $\mu_n\big(L(0,0,f(w_\varepsilon))\big)$ are bounded as well. Also, all seminorms of $n_{0\varepsilon}$, $n_{1\varepsilon}$, $m_\varepsilon$ are of order $O(\varepsilon^q)$ for every $q\geq 0$, so the same is true of
$\mu_n\big(L(n_{0\varepsilon}, n_{1\varepsilon}, m_\varepsilon)\big)$. Using \eqref{eq : E}, it follows that the right-hand side in \eqref{eq : Mestimate} will eventually be less than $1$ as $\varepsilon\to 0$. Thus $W$ is seen to belong to $\mathcal{M}$ and hence coincides with the unique solution in $\mathcal{M}$.
\end{proof}
\begin{remark}\label{rem : uniqueness}
At this stage it is not clear whether the uniqueness assertion of Theorem\;\ref{thm : existence} holds without assuming that the support is contained in $K_0$. However, if $f$ belongs to $\mathcal{O}_M(\mathbb{R})$ and is globally Lipschitz, uniqueness holds even in $\mathcal{G}([0,T] \times \mathbb{R}^d)$
without further assumptions, as was noted in the introduction.
\end{remark}
Theorem\;\ref{thm : existence} can be restated in classical terms. In what follows, the notation $O(\eps^\infty)$ signifies a net of smooth functions vanishing to any order, that is, satisfying the negligibility estimate \eqref{eq:negligible} for every $a\geq 0$.
\begin{corollary}\label{prop : veryweak}
Given nets $(u_{0\varepsilon}, u_{1\varepsilon})_{\varepsilon\in(0,1]}$ of smooth functions of compact support in $B_0$ satisfying the bounded type condition \eqref{eq:boundedtype}, there is a net $(u_\varepsilon)_{\varepsilon\in(0,1]}$ of smooth functions, satisfying the moderateness condition \eqref{eq:moderate}, which solves the initial value problem that
\begin{equation}\label{eq : NLWEclassical}
\begin{array}{l}
	\partial_t^2u_\eps - \Delta u_\eps = e_\varepsilon f(u_\eps) + O(\eps^\infty),  \quad t \in [0,T], \ x \in \mathbb{R}^d, \vspace{4pt}\\
	u_\eps|_{t = 0} = u_{0\varepsilon},\quad \partial_tu_\eps|_{t = 0} = u_{1\varepsilon}, \quad x \in \mathbb{R}^d
\end{array}
\end{equation}
Further, any two nets solving \eqref{eq : NLWEclassical} with supports in $K_0$ differ by $O(\eps^\infty)$ as well.
\end{corollary}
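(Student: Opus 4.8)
The plan is to read Theorem~\ref{thm : existence} at the level of representatives, turning the generalized solution into a classical net and keeping track of exactly where the ideal $\cN$ enters. First I would package the given data: the nets $(u_{0\varepsilon})$ and $(u_{1\varepsilon})$ are of bounded type on compact sets (together with all their derivatives), so they represent elements $U_0,U_1\in\cG^0(\R^d)$ with support in $B_0$, and $(e_\varepsilon)$ represents a generalized number $E$, where assumption (A1) gives $\nu_0(E)>0$. Assumptions (A2), (A3) then hold, so Theorem~\ref{thm : existence} yields a unique $U\in\cG^0([0,T]\times\R^d)$ with support in $K_0$ solving \eqref{eq : regular nonlinear wave equation}.

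Next I would extract a good representative. From the fixed point identity in the proof of Theorem~\ref{thm : existence}, any representative $(u_\varepsilon)$ of $U$ satisfies $u_\varepsilon = L(u_{0\varepsilon},u_{1\varepsilon},e_\varepsilon f(u_\varepsilon)) + n_\varepsilon$ with $(n_\varepsilon)\in\cN([0,T]\times\R^d)$. The natural candidate for the corollary is the adjusted net $\tilde u_\varepsilon = L(u_{0\varepsilon},u_{1\varepsilon},e_\varepsilon f(u_\varepsilon))$, obtained by discarding the negligible term; it is again a representative of $U$, hence moderate (in fact of bounded type), and by the defining property of the solution operator $L$ it carries the \emph{exact} initial values $\tilde u_\varepsilon|_{t=0}=u_{0\varepsilon}$, $\partial_t\tilde u_\varepsilon|_{t=0}=u_{1\varepsilon}$, while $(\partial_t^2-\Delta)\tilde u_\varepsilon = e_\varepsilon f(u_\varepsilon)$ holds identically.

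The one point that needs care is replacing $f(u_\varepsilon)$ by $f(\tilde u_\varepsilon)$ on the right-hand side. Since $\tilde u_\varepsilon - u_\varepsilon = -n_\varepsilon = O(\varepsilon^\infty)$ and both nets remain in a fixed bounded set on $K_0$ (because $U\in\cG^0$), the local Lipschitz bound on $f$, applied recursively to all derivatives, gives $f(u_\varepsilon)-f(\tilde u_\varepsilon) = O(\varepsilon^\infty)$; multiplying by $e_\varepsilon = O(\varepsilon^b)$ keeps this negligible. Hence $(\partial_t^2-\Delta)\tilde u_\varepsilon = e_\varepsilon f(\tilde u_\varepsilon) + O(\varepsilon^\infty)$, which is precisely \eqref{eq : NLWEclassical}. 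I expect this to be the only genuine obstacle: the generalized solution solves the equation and matches its data only modulo $\cN$, and it is exactly the smallness of $E$ that lets one absorb the residual and still impose the initial data exactly.

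For the uniqueness assertion I would run the argument in reverse. Two bounded-type nets solving \eqref{eq : NLWEclassical} with support in $K_0$ define elements $U,V\in\cG^0([0,T]\times\R^d)$ supported in $K_0$; since the $O(\varepsilon^\infty)$ terms and the data discrepancies lie in $\cN$, each satisfies \eqref{eq : regular nonlinear wave equation} with data $U_0,U_1$. By the uniqueness part of Theorem~\ref{thm : existence}, both coincide with the unique solution, so $U=V$ in $\cG([0,T]\times\R^d)$, which means exactly that the difference of the two nets belongs to $\cN([0,T]\times\R^d)$, i.e. is $O(\varepsilon^\infty)$. The remainder of the work is routine translation between the generalized and classical formulations.
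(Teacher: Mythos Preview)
Your proof is correct and follows the same approach as the paper, which dispatches the corollary in one sentence by pointing to equation~\eqref{eq : solution} in the proof of Theorem~\ref{thm : existence} and the meaning of equality in $\cG^0$. You are in fact more careful than the paper on one point: by passing to the adjusted representative $\tilde u_\varepsilon = L(u_{0\varepsilon},u_{1\varepsilon},e_\varepsilon f(u_\varepsilon))$ you force the initial data to be matched \emph{exactly} and then absorb $f(u_\varepsilon)-f(\tilde u_\varepsilon)$ into the $O(\varepsilon^\infty)$ term, whereas the paper simply asserts the initial conditions for the fixed-point representative without this adjustment.
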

\begin{proof}
This is simply a restatement of Theorem\;\ref{thm : existence} in terms of representatives, invoking the meaning of the solution concept in $\mathcal{G}^0([0,T] \times \mathbb{R}^d)$, in particular, \eqref{eq : solution}.
\end{proof}
We complete the investigation by showing that, for $\mathcal{G}^0$-initial data, the generalized solution given by Theorem\;\ref{thm : existence} is actually associated with the solution to the linear wave equation with the same initial data. Recall that two elements $U$, $V$ of $\cG([0,T]\times\mathbb{R}^d)$ are \emph{associated}, if
\[
   \lim_{\varepsilon\to 0}\int_0^T\int_{\R^d}\big(u_\varepsilon(t,x) - v_\varepsilon(t,x)\big)\psi(t,x)dxdt = 0
\]
for every $\psi\in \mathcal{C}^\infty([0,T]\times\mathbb{R}^d)$ of compact support, where $(u_\varepsilon)_{\varepsilon\in(0,1]}$ and $(v_\varepsilon)_{\varepsilon\in(0,1]}$
are representatives of $U$ and $V$, respectively.
\begin{proposition}\label{prop : association}
Under the assumptions of Theorem\;\ref{thm : existence}, let $U\in\cG^0([0,T]\times\mathbb{R}^d)$ be the unique solution to \eqref{eq : regular nonlinear wave equation}. Let $V\in\cG^0([0,T]\times\mathbb{R}^d)$ be the unique solution to equation \eqref{eq : regular nonlinear wave equation}
with $f\equiv 0$ and the same initial data $U_0, U_1$.  Then $U$ and $V$ are associated.
\end{proposition}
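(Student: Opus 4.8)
The plan is to descend to representatives and reduce association to a single elementary limit. Fix a representative $(u_\eps)$ of the solution $U$ from Theorem~\ref{thm : existence}; by the solution concept recorded in \eqref{eq : solution} it satisfies $u_\eps = L(u_{0\eps},u_{1\eps},e_\eps f(u_\eps)) + n_\eps$ with $(n_\eps)\in\cN([0,T]\times\R^d)$. Choosing once and for all representatives $(u_{0\eps}),(u_{1\eps})$ of the common data $U_0,U_1$, the solution $V$ of the linear problem (Theorem~\ref{thm : existence} with $f\equiv 0$, or the classical linear theory) has a representative $v_\eps = L(u_{0\eps},u_{1\eps},0) + \tilde n_\eps$ with $(\tilde n_\eps)\in\cN([0,T]\times\R^d)$. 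By linearity of the solution operator $L$,
\[
   u_\eps - v_\eps = e_\eps\,L\big(0,0,f(u_\eps)\big) + \big(n_\eps - \tilde n_\eps\big).
\]
It therefore suffices to show that each term, integrated against an arbitrary $\psi\in\cC^\infty([0,T]\times\R^d)$ of compact support, tends to $0$ as $\eps\to 0$.

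The negligible contribution is immediate: $(n_\eps - \tilde n_\eps)\in\cN([0,T]\times\R^d)$ converges to $0$ uniformly on the compact set $\supp\psi$, so its integral against $\psi$ is $O(\eps^\infty)$.

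For the principal term I would exploit the two structural features already in hand. First, finite propagation speed (Lemma~\ref{lem:waveestimates}(a)) confines the support of $L(0,0,f(u_\eps))$ to $K_0$, which is compact in $[0,T]\times\R^d$. Second, since $U\in\cG^0$ and $\cG^0$ is stable under superposition with smooth maps, $f(U)\in\cG^0$, whence $L(0,0,f(U))\in\cG^0$ by Lemma~\ref{lem:waveestimates}(b); in particular $\sup_{K_0}|L(0,0,f(u_\eps))| = O(1)$. Combining these with $|e_\eps| = O(\eps^b)\to 0$, which comes from $\nu_0(E)>0$, one obtains
\[
   \Big|\int_0^T\!\!\int_{\R^d} e_\eps\,L(0,0,f(u_\eps))\,\psi\,dx\,dt\Big|
   \le |e_\eps|\,\sup_{K_0}\big|L(0,0,f(u_\eps))\big|\int_{K_0}|\psi|\,dx\,dt = O(\eps^b),
\]
which tends to $0$. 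Adding the two estimates yields association.

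I do not expect a genuine analytic obstacle here: everything delicate has already been established in the proof of Theorem~\ref{thm : existence}. The only point requiring (modest) care is the observation that association, being a weak integral statement, does not need the full $\cG^0$-bound on all derivatives—the bounded-type estimate on the single compact set $K_0$ together with the smallness $|e_\eps|\to 0$ suffices. Consequently the conclusion persists even though $U$ need not be close to $V$ in the sharp topology: the nonlinear term $e_\eps f(u_\eps)$ is suppressed only in the averaged sense, which is exactly what association records.
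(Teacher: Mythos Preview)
Your argument is correct and follows essentially the same route as the paper: write $u_\eps - v_\eps$ as $e_\eps L(0,0,f(u_\eps))$ plus negligible terms, then use $|e_\eps|=O(\eps^b)$ together with the $\cG^0$-bound on $L(0,0,f(U))$. Two minor remarks. First, the appeal to Lemma~\ref{lem:waveestimates}(a) for the support of $L(0,0,f(u_\eps))$ is not quite warranted at the level of representatives (a representative of $f(U)$ need not vanish pointwise outside $K_0$), but this step is superfluous: the $\cG^0$-bound already gives $\sup_{\supp\psi}|L(0,0,f(u_\eps))|=O(1)$, which is all you use. Second, the paper's version is nominally stronger: it shows $\mu_0(u_\eps-v_\eps)\to 0$, i.e.\ uniform convergence on $K_0$ (hence strong association, cf.\ Remark~\ref{rem:strongassoc}), rather than only the weak integral statement---but your estimate $O(\eps^b)$ yields the same.
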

\begin{proof}
Note that the solution $V$ to the linear wave equation exists and is unique according to Theorem\;\ref{thm : existence} and Remark\;\ref{rem : uniqueness}.
The proof of Proposition\;\ref{prop : association} proceeds in the same way as the proof of the uniqueness part of Theorem\;\ref{thm : existence}.
Let $(u_\varepsilon)_{\varepsilon\in(0,1]}$ and $(v_\varepsilon)_{\varepsilon\in(0,1]}$ be representatives of $U$ and $V$.
There are elements $(m_\varepsilon)_{\varepsilon\in(0,1]}\in \mathcal{N}([0,T] \times \mathbb{R}^d)$
and $(n_{0\varepsilon})_{\varepsilon\in(0,1]}, (n_{1\varepsilon})_{\varepsilon\in(0,1]}\in \mathcal{N}(\mathbb{R}^d)$ such that
\begin{equation*}
\begin{array}{lr}
\partial_t^2 (u_\varepsilon - v_\varepsilon) - \Delta (u_\varepsilon - v_\varepsilon) = e_\varepsilon f(u_\varepsilon) + m_\varepsilon, \\
(u_\varepsilon - v_\varepsilon)|_{t = 0} = n_{0\varepsilon},\quad \partial_t (u_\varepsilon - v_\varepsilon)|_{t = 0} = n_{1\varepsilon}.
\end{array}
\end{equation*}
It follows that
\[
   (u_\varepsilon - v_\varepsilon) = L(n_{0\varepsilon}, n_{1\varepsilon},e_\varepsilon f(u_\varepsilon) + m_\varepsilon)
\]
and thus, in particular,
\begin{equation}\label{eq : assocestimate}
  \mu_0(u_\varepsilon - v_\varepsilon)
      \leq |e_\varepsilon| \mu_0\big(L(0,0,f(u_\varepsilon))\big) + \mu_0\big(L(n_{0\varepsilon}, n_{1\varepsilon}, m_\varepsilon)\big).
\end{equation}
Since $U$ belongs to $\mathcal{G}^0([0,T] \times \mathbb{R}^d)$, the seminorms $\mu_0(u_\varepsilon)$ are bounded as $\varepsilon\to 0$. From Lemma\;\ref{lem:waveestimates}, the seminorms $\mu_0\big(L(0,0,f(u_\varepsilon))\big)$ are bounded as well. Also, all seminorms of $n_{0\varepsilon}$, $n_{1\varepsilon}$, $m_\varepsilon$ are of order $O(\varepsilon^q)$ for every $q\geq 0$, so the same is true of
$\mu_0\big(L(n_{0\varepsilon}, n_{1\varepsilon}, m_\varepsilon)\big)$. Using \eqref{eq : E}, it follows that the right-hand side in \eqref{eq : assocestimate} tends to zero as $\varepsilon\to 0$. Thus
\[
\lim_{\varepsilon\to 0}\mu_0(u_\varepsilon - v_\varepsilon) = 0.
\]
In particular, $U$ and $V$ are associated.
\end{proof}
\begin{remark}\label{rem:strongassoc}
The estimate \eqref{eq : E} actually shows that $U$ and $V$ are \emph{strongly associated} in the sense of \cite{NPS:1998}.
\end{remark}
Finally, a few words of explanation appear appropriate, why it is possible to have global solutions (on arbitrary time intervals $[0,T]$) for small nonlinearities without any growth or sign restrictions on the nonlinearity (and large initial data). In fact, this is strongly related to classical global existence results for small initial data (and large nonlinearities).
\begin{example}\label{ex : lifespan}
(a) An illustrative example is given by the ordinary differential equation
\begin{equation}\label{eq:ode}
   y'(t) = \eps y^2(t),\quad y(0) = 1.
\end{equation}
The local solution is
\[
   y(t) = \frac{1}{1-\eps t}.
\]
At fixed $\eps$, the solution exists for $0\leq t <  1/\eps$ and thus has finite lifespan. Viewed differently, at fixed, but arbitrary $T$, the solutions exist for $0\leq\eps <  1/T$. This observation makes it possible to construct a net of solutions $(y_\eps)_{\eps\in(0,1]}$ which solves equation \eqref{eq:ode} asymptotically as $\eps\to 0$ with given lifespan $T$, or alternatively provides a representative of a generalized solution in the Colombeau algebra $\cG[0,T]$.

The relation with small data solutions is immediately established by rescaling. Indeed, $z(t) = \eps y(t)$ solves
\begin{equation*}
   z'(t) = z^2(t),\quad z(0) = \eps
\end{equation*}
in the same range of $\eps$ and $t$.

(b) The example can be easily upgraded to provide a solution to a semilinear wave equation with small nonlinerity (which does not depend on the space variable $x\in\R^d$). Indeed, $u_\eps(t,x) = 1/(1-\eps t)$ is a classical solution to
\begin{equation*}
\begin{array}{lr}
\partial_t^2 u_\varepsilon - \Delta u_\varepsilon = 2\varepsilon^2 u_\varepsilon^3, \\
u_\varepsilon|_{t = 0} = 1,\quad \partial_t u_\varepsilon|_{t = 0} = \varepsilon
\end{array}
\end{equation*}
in any space dimension $d$. The solution exists on the time interval $[0,T]$ provided $0 \leq \eps < 1/T$.

(c) Corollary\;\ref{prop : veryweak} is strongly related to classical results on the life\-span of solutions to semilinear wave equations with small initial data. For example, following \cite{Lindblad:1990}, there is the following lifespan estimate for smooth solutions to
\begin{equation}\label{eq : Lindblad}
\begin{array}{l}
	\partial_t^2v - \Delta v = v^2,  \quad t \in [0,T], \ x \in \mathbb{R}^3, \vspace{4pt}\\
	v|_{t = 0} = \eps \psi_0,\quad \partial_t v|_{t = 0} = \eps\psi_1, \quad x \in \mathbb{R}^3
\end{array}
\end{equation}
where $\psi_0,\psi_1$ are smooth functions of compact support: There is $\mu$ and $\eps_0$ such that the solution to \eqref{eq : Lindblad} exists at least for $T< \mu^2/\eps_0^2$, see \cite[Theorem 2.2]{Lindblad:1990}. Setting $u_\eps = v/\eps$, we obtain a solution to
\begin{equation*}
\begin{array}{l}
	\partial_t^2u_\eps - \Delta u_\eps = \eps u_\eps^2,  \quad t \in [0,T], \ x \in \mathbb{R}^3, \vspace{4pt}\\
	u_\eps|_{t = 0} =  \psi_0,\quad \partial_t u_\eps|_{t = 0} = \psi_1, \quad x \in \mathbb{R}^3,
\end{array}
\end{equation*}
and this solution exists on the time interval $[0,T]$ whenever $\eps < \mu/\sqrt{T}$.
\end{example}
\begin{remark}\label{rem : classical}
It is possible to prove Theorem\;\ref{thm : existence} by classical means. One would start by constructing a fixed point of the integral equation
\[
   u = L\big(u_0,u_1,h(\eps)f(u)\big)
\]
in a ball $\cB$ of radius 1 around $L(u_0,u_1,0)$ in $\cC([0,T]\times\R^d)$. The lifespan $T$ is related to the size of the initial data and bounds on $h(\eps)f$ and its first derivatives on the bounded subset of $[0,T]\times\R^d$ spanned by the range of the initial data (plus 1). As $h(\eps)\to 0$, these bounds become smaller than 1, thus turning $L$ into a contraction on the ball $\cB$ in $\cC([0,T]\times\R^d)$ for $\eps<\eps_0$. Next, one would recursively repeat the argument in $\cC^k([0,T]\times\R^d)$, $k\geq 1$, employing a priori estimates on the previously obtained solution in $\cC^{k-1}([0,T]\times\R^d)$ in order to keep the lifespan $T$ and $\eps_0$ fixed. This results in a net $(u_\eps)_{\eps\in(0,\eps_0)}$ of smooth solutions. Finally, one has to establish the $\cG^0$-estimates for existence of a generalized solution and the $\cN$-estimates for uniqueness. Clearly, this path is more tedious than the one which we used with the contraction mapping argument in the sharp topology.
\end{remark}

\subsection*{Acknowledgments}
The results in this paper were obtained during several visits of the first author to Universit\"{a}t Innsbruck. He expresses his heartfelt thanks to the Unit of Engineering Mathematics for the warm hospitality during his visits.

\appendix

\section{Appendix: Linear estimates}
\label{sec:app}

The appendix collects the required estimates for solutions to the linear wave equation in space dimensions $d = 1,2,3$.
The solution formulas are well known; nevertheless, we found it useful to display them here so that the reader can easily visualize the aguments needed for the proof of Lemma\;\ref{lem:waveestimates} below.
We consider classical, smooth solutions to
\begin{equation}\label{eq : linear wave equation}
\begin{array}{lr}
\partial_t^2 u(t,x) - \Delta u(t,x) = h(t,x), & (t,x)\in [0,T]\times\mathbb{R}^d,\vspace{4pt} \\
u(0,x) = u_0(x),\quad \partial_t u(0,x) = u_1(x), & x \in \mathbb{R}^d\vspace{4pt} \\
\end{array}
\end{equation}
with $u_0, u_1 \in \mathcal{C}^\infty(\mathbb{R}^d)$ and $h \in \mathcal{C}^\infty([0,T]\times\mathbb{R}^d)$. The solution
$u \in \mathcal{C}^\infty([0,T]\times\mathbb{R}^d)$ is given by the solution operator $L(u_0,u_1,h)$, which can be obtained by means of convolution with the fundamental solution to the Cauchy problem and Duhamel's principle (see e.g. \cite{TrevesBasic}). The crucial property that allows one to obtain $L^\infty$-estimates in space dimensions $d= 1,2,3$ is the fact that in these three cases, the fundamental solution is a smooth map of time $t\in[0,\infty)$ with values in the space of integrable measures (in fact, integrable functions for $d = 1,2$).

The solution operator $L(u_0,u_1,h)$ is of the following form.
In space dimension $d=1$, $L$ is given by d'Alembert's formula
\begin{eqnarray*}
   L(u_0,u_1,h)(t,x) &=& \frac12(u_0(x+t) + u_0(x-t)) + \frac12\int_{x-t}^{x+t}u_1(y) dy\\
      && +\ \frac12\int_0^t\int_{x-t+s}^{x+t-s}h(s,y) dy ds
\end{eqnarray*}
or alternatively
\begin{eqnarray*}
      &=& \frac12(u_0(x+t) + u_0(x-t)) + \frac12\int_{-t}^{t}u_1(x-y) dy\\
      && + \ \frac12\int_0^t\int_{-s}^{s}h(t-s,x-y) dy ds.
\end{eqnarray*}
In space dimension $d=2$, $L$ is given by Poisson's formula. Namely, $2\pi$ times $L(u_0,u_1,h)(t,x)$ equals
\begin{eqnarray*}
   && \frac{d}{dt}\iint_{|x-y| \leq t} \frac1{\sqrt{t^2-|x-y|^2}}\,u_0(y) dy + \iint_{|x-y| \leq t} \frac1{\sqrt{t^2-|x-y|^2}}\,u_1(y) dy\\
      && +\ \int_0^t\iint_{|x-y| \leq t-s} \frac1{\sqrt{(t-s)^2-|x-y|^2}}\,h(s,y)dy ds
\end{eqnarray*}
or alternatively
\begin{eqnarray*}
    &=& \frac{d}{dt}\iint_{|y| \leq 1} \frac{t}{\sqrt{1-|y|^2}}\,u_0(x-ty) dy + \iint_{|y| \leq 1} \frac{t}{\sqrt{1-|y|^2}}\,u_1(x-ty) dy\\
      && +\ \int_0^t\iint_{|y| \leq 1} \frac{s}{\sqrt{1-|y|^2}}\,h(t-s,x-sy) dy ds.
\end{eqnarray*}
In space dimension $d=3$, $L$ is given by Kirchhoff's formula. Namely, $4\pi$ times $L(u_0,u_1,h)(t,x)$ equals
\begin{eqnarray*}
    && \frac{d}{dt}\iint_{|x-y| = t} \frac1{t}u_0(y) d\omega(y) + \iint_{|x-y| = t} \frac1{t}u_1(y) d\omega(y)\\
      && +\ \int_0^t\iint_{|x-y| = t-s} \frac1{t-s}h(s, y) d\omega(y) ds
\end{eqnarray*}
where $d\omega$ denotes the surface element, or alternatively
\begin{eqnarray*}
    &=& \frac{d}{dt}\iint_{|\omega| = 1} t\, u_0(x-t\omega) d\omega + \iint_{|\omega| = 1} t\, u_1(x - t\omega) d\omega\\
      && +\ \int_0^t\iint_{|\omega| = 1} s\, h(t - s, x - s\omega) d\omega ds.
\end{eqnarray*}
Let $r\geq 0$. We denote the ball of radius $r$ around zero by $B_0 = \{x\in\mathbb{R}^d: |x|\leq r\}$. Further, for $T>0$, we let
\[
   K_0 = \{(t,x)\in[0,T]\times\mathbb{R}^d:|x|\leq t + r\}
\]
and take $K_0$ as a starting element of an exhausting sequence $(K_n)_{n\in\N}$ of compact subsets of $[0,T]\times \mathbb{R}^d$.
The seminorms $\mu_n$ on $\mathcal{C}^\infty([0,T]\times \mathbb{R}^d)$ are defined as in \eqref{eq : seminorms}.
This is done for notational consistency; actually, all estimates take place on the initial compact set $K_0$.
Similarly, we take $B_0$ as starting element of an exhausting sequence $(B_n)_{n\in\N}$ of compact subsets of $\mathbb{R}^d$.
The corresponding seminorms on $\mathcal{C}^\infty(\mathbb{R}^d)$ are denoted by $\mu_n^0$.

We have the following properties of the solution operator $L$ in space dimensions $d = 1,2,3$.
\begin{lemma}\label{lem:waveestimates}
Assume that $u_0, u_1 \in \mathcal{C}^\infty(\mathbb{R}^d)$ and $h \in \mathcal{C}^\infty([0,T]\times\mathbb{R}^d)$.

(a) If the supports of $u_0$ and $u_1$ are contained in $B_0$ and the support of $h$ is contained in $K_0$, then the support of $L(u_0,u_1,h)$ is contained in $K_0$.

(b) There is a sequence of positive constants $C_n$, depending only on $T$, such that
\[
   \mu_n\big(L(u_0,u_1,h)\big) \leq C_n\Big(\mu_{n+1}^0(u_0) + \mu_n^0(u_1) + \mu_n(h)\Big).
\]
\end{lemma}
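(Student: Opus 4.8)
The plan is to read both assertions directly off the explicit representations of $L(u_0,u_1,h)$ displayed above, treating them separately. For part (a) I would invoke the finite propagation speed that is built into each formula. In every dimension $d=1,2,3$, the value of $L(u_0,u_1,h)$ at $(t,x)$ depends on $u_0,u_1$ only through their values on the ball (or sphere) $\{y:|x-y|\le t\}$, and on $h$ only through its values on the backward light cone $\{(s,y):0\le s\le t,\ |x-y|\le t-s\}$; this is visible from the regions of integration $|x-y|\le t$, $|x-y|=t$ and $|x-y|\le t-s$. If $L(u_0,u_1,h)(t,x)\ne 0$, then either the data ball meets $B_0$, giving a point $y$ with $|x-y|\le t$ and $|y|\le r$, whence $|x|\le|x-y|+|y|\le t+r$; or the backward cone meets $\supp h\subset K_0$, giving $(s,y)$ with $|x-y|\le t-s$ and $|y|\le s+r$, whence again $|x|\le (t-s)+(s+r)=t+r$. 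In either case $(t,x)\in K_0$, so $\supp L(u_0,u_1,h)\subset K_0$.

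For part (b) I would use the alternative forms of the formulas, in which the domains of integration are fixed (the unit ball $|y|\le 1$ for $d=2$, the unit sphere $|\omega|=1$ for $d=3$, the interval $|y|\le t$ for $d=1$), and estimate $\partial^\alpha L$ for $|\alpha|\le n$ term by term. A spatial derivative $\partial_x^\beta$ passes straight through the kernels — which depend on $t,s$ but not on $x$ — onto $u_0,u_1,h$, so such a term is bounded by the total mass of the kernel (finite and uniformly bounded for $t\in[0,T]$, since in these dimensions the fundamental solution is a smooth $[0,T]$-family of integrable measures, the $d=2$ weight $1/\sqrt{1-|y|^2}$ being integrable over the unit ball) times the corresponding sup of $|\partial^\beta u_0|$, $|\partial^\beta u_1|$ or $|\partial^\beta h|$. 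A time derivative $\partial_t$ acts on the explicit factors of $t,s$ (producing only polynomial-in-$T$ factors) and, via the chain rule $\partial_t u_0(x-t\omega)=-\omega\cdot\nabla u_0(x-t\omega)$, turns into a spatial derivative of the data; iterating and collecting the measures of integration (all powers of $t\le T$) yields constants $C_n$ depending only on $T$. The single extra derivative in $\mu_{n+1}^0(u_0)$ is exactly accounted for by the leading operator $\tfrac{d}{dt}$ standing in front of the $u_0$ term in the $d=2,3$ formulas (for $d=1$ the bound $\mu_n^0\le\mu_{n+1}^0$ is wasteful but harmless), while the $u_1$ and $h$ terms carry no such prefactor and need only $\mu_n^0(u_1)$ and $\mu_n(h)$.

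The main obstacle is the careful bookkeeping of the time derivatives, and two points require attention. First, the prefactor $\tfrac{d}{dt}$ on the $u_0$ term must be shown to cost exactly one extra derivative and no more, which I would verify recursively on the derivative order. Second, in the Duhamel term the upper limit $s=t$ of the time integral is variable, so each $\partial_t$ produces a boundary contribution together with an integral of $\partial_t h$, and tracking these recursively is what forces the evaluation of $h$ and its derivatives and produces the factor $\mu_n(h)$. An efficient way to organise this is to use the equation $\partial_t^2 L=\Delta L+h$ to trade pairs of time derivatives for spatial derivatives plus $h$, reducing every $\partial^\alpha$ with $|\alpha|\le n$ to at most one time derivative followed by spatial derivatives, which are then estimated directly from the fixed-domain formulas as above.
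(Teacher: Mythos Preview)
Your proposal is correct and follows essentially the same approach as the paper: part (a) is read off the primary solution formulas via finite propagation speed, and part (b) is obtained by differentiating the alternative (fixed-domain) formulas and observing that every derivative of $L(u_0,u_1,h)$ is a finite combination of integrals of derivatives of $u_0,u_1,h$ (and boundary values of $h$) against kernels of bounded total mass. The paper's own proof is terser and does not spell out the triangle-inequality step or the trick of using $\partial_t^2 L=\Delta L+h$ to reduce higher time derivatives, but the underlying argument is the same.
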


\begin{proof}
(a) The support property is obvious from the primary versions of d'Alembert's, Poisson's, and Kirchhoff's solution formulas.

(b) The estimate of order $n=0$ is obvious from the secondary versions of the formulas. When differentiating $L(u_0,u_1,h)$ with respect to $x$ or $t$, the alternative versions of the solution formulas are more useful. One easily observes that each derivative of $L(u_0,u_1,h)$ consists of a finite number of terms involving the derivatives of $u_0$, $u_1$ and $h$, the values of
$h$ at $t=0$, and possibly integrals thereof over $B_0$ or $K_0$. The estimate follows immediately.
\end{proof}

\end{document}